\newtheorem{theorem}{Theorem}[section]
\newtheorem{lemma}[theorem]{Lemma}
\def\func#1{\mathop{\mathrm{#1}}\nolimits}
\def\dint{\displaystyle\int}
\def\Qlb#1{#1}
\def\FRAME#1#2#3#4#5#6#7#8
\begin{document}
\title[Schrodinger eigenvalues]{A lower bound for the number of negative
eigenvalues of Schr\"{o}dinger operators}
\author[Grigor'yan]{Alexander Grigor'yan}
\thanks{AG was supported by SFB 701 of German Research Council}
\address{Department of Mathematics, University of Bielefeld, 33501
Bielefeld, Germany\\
\texttt{grigor@math.uni-bielefeld.de}}
\author[Nadirashvili]{Nikolai Nadirashvili}
\thanks{NN was supported by the Alexander von Humboldt Foundation}
\address{Universit\'{e} Aix-Marseille, CNRS, I2M, Marseille,
France \\
\texttt{nicolas@cmi.univ-mrs.fr}}
\author[Sire]{Yannick Sire}
\address{Universit\'{e} Aix-Marseille, I2M, UMR 7353, Marseille, France\\
\texttt{sire@cmi.univ-mrs.fr}}
\maketitle

\begin{abstract}
We prove a lower bound for the number of negative eigenvalues for a Sch\"{o}%
dinger operator on a Riemannian manifold via the integral of the potential.
\end{abstract}

\section{Introduction}

Let $(M,g)$ be a compact Riemannian manifold without boundary. Consider the
following eigenvalue problem on $M$: 
\begin{equation}
-\Delta u-Vu=\lambda u,  \label{eqSchro}
\end{equation}%
where $\Delta $ is the Laplace-Beltrami operator on $M$ and $V\in L^{\infty
}\left( M\right) $ is a given potential. It is well-known, that the operator 
$-\Delta -V$ has a discrete spectrum. Denote by $\left\{ \lambda
_{k}(V)\right\} _{k=1}^{\infty }$ the sequence of all its eigenvalues
arranged in increasing order, where the eigenvalues are counted with
multiplicity.

Denote by $\mathcal{N}(V)$ the number of negative eigenvalues of (\ref%
{eqSchro}), that is,

\begin{equation*}
\mathcal{N}(V)=\func{card}\left\{ k:\,\lambda _{k}(V)<0\right\} .
\end{equation*}%
It is well-known that $\mathcal{N}\left( V\right) $ is finite. Upper bounds
of $\mathcal{N}\left( V\right) $ have received enough attention in the
literature, and for that we refer the reader to \cite{BirSol}, \cite%
{GrigNadirNeg}, \cite{Lieb}, \cite{LiYauE}, \cite{YangYau} and references
therein.

However, a little is known about lower estimates. Our main result is the
following theorem. We denote by $\mu $ the Riemannian measure on $M$.

\begin{theorem}
\label{main}Set $\dim M=n$. For any $V\in L^{\infty }\left( M\right) $ the
following inequality is true: 
\begin{equation}
\mathcal{N}(V)\geq \frac{C}{\mu \left( M\right) ^{n/2-1}}\left(
\int_{M}Vd\mu \right) _{+}^{n/2},  \label{NV}
\end{equation}%
where $C>0$ is a constant that in the case $n=2$ depends only on the genus
of $M$ and in the case $n>2$ depends only on the conformal class of $M$.
\end{theorem}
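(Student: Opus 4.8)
The plan is to use the Glazman-type variational principle: $\mathcal{N}(V)$ equals the maximal dimension of a subspace $L\subset C^\infty(M)$ on which the quadratic form $Q_V(u)=\int_M|\nabla u|^2\,d\mu-\int_M Vu^2\,d\mu$ is negative definite; together with the fact that if $u_1,\dots,u_N$ have pairwise disjoint supports then $Q_V(\sum c_iu_i)=\sum c_i^2Q_V(u_i)$, so their span is $N$-dimensional and $Q_V$ is negative definite on it as soon as each $Q_V(u_i)<0$. Thus, assuming $I:=\int_M V\,d\mu>0$ (otherwise there is nothing to prove), the whole task is to exhibit $N\asymp C\,\mu(M)^{1-n/2}I^{n/2}$ open sets $\Omega_1,\dots,\Omega_N$ with disjoint closures, each carrying a function $u_i\in C_0^\infty(\Omega_i)$ with $\int_{\Omega_i}|\nabla u_i|^2\,d\mu<\int_{\Omega_i}Vu_i^2\,d\mu$.

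Each $u_i$ will be a cutoff equal to $1$ on a core $K_i\Subset\Omega_i$ and decaying to $0$ on the collar $\Omega_i\setminus K_i$, so that $Q_V(u_i)\le\mathrm{cap}(K_i,\Omega_i)-\int_{K_i}V\,d\mu+\int_{\Omega_i\setminus K_i}V_-\,d\mu$; hence I need $N$ disjoint annular regions for which the capacity term is small, the collar carries little $V_-$-mass, and the core captures a $\gtrsim I/N$ share of $\int_M V$. I would obtain such regions from a Grigor'yan--Netrusov--Yau-type decomposition of $M$ into disjoint annuli with controlled radius ratios and prescribed measure, whose quantitative constants depend only on a metric covering constant of $(M,\mathrm{dist})$. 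The freedom to rescale within the conformal class is what makes those constants depend only on the genus when $n=2$: one replaces $g$ by the pullback $\Phi^*g_{S^2}$ under a Yang--Yau conformal branched covering $\Phi\colon M\to S^2$ of degree bounded in terms of the genus, which leaves $\int_M|\nabla u|^2\,d\mu$ unchanged by conformal invariance while its covering constant is controlled by the degree. For $n>2$ this two-dimensional conformal invariance is lost; here one exploits a conformal immersion of $M$ into a round sphere, so the governing constant is the Li--Yau conformal volume of the conformal class, and reconciling this with the non-invariance of the Dirichlet integral in dimension $n$ is part of the work.

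In the regime where $I$ is large the two dimensions behave differently. For $n=2$ the Dirichlet integral is scale invariant, and a logarithmic cutoff across a fat annulus $\{r<\mathrm{dist}(x_i,\cdot)<R\}$ has energy $\asymp1/\log(R/r)$; spreading the transition over many dyadic shells makes this smaller than any fixed $\varepsilon_0$, and since $V_-\in L^1$ only finitely many shells carry non-negligible $V_-$-mass, the transition can be routed through the good shells, reconciling smallness of the capacity term with smallness of the collar integral. For $n>2$ the capacity $\mathrm{cap}(B(x_i,r_i),B(x_i,2r_i))\asymp r_i^{\,n-2}$ does not shrink, so instead one fixes the critical radius $r_i\asymp(\mu(M)/I)^{1/2}$: a core of that radius carries on average $\asymp(I/\mu(M))r_i^{\,n}\asymp r_i^{\,n-2}$ of the mass of $V$, which beats the capacity term, while the number of disjoint balls of radius $r_i$ one can pack into $M$ is $\asymp\mu(M)/r_i^{\,n}\asymp\mu(M)^{1-n/2}I^{n/2}$ -- precisely the asserted count. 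Summing $Q_V(u_i)<0$ over the disjoint $\Omega_i$ and optimizing the thresholds then gives (\ref{NV}).

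The step I expect to be the main obstacle is making the cores capture a fair share of $\int_M V$ while the collars avoid the negative part of $V$: the estimate genuinely requires $\int_M V\,d\mu$ and not $\int_M V_+\,d\mu$ on the right-hand side (it is false with $V_+$, as narrow deep wells show), so one cannot simply run the decomposition against the measure $V_+\,d\mu$; it has to be adaptive enough to do both at once. Entangled with this is the requirement that all constants depend only on the genus, resp. the conformal class, which is what forces the conformal reductions above and a careful control of how the covering constant degrades under the Yang--Yau covering, resp. under the chosen conformal immersion.
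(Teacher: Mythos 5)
Your proposal takes a genuinely different route from the paper, but it stops exactly at the point where the real work begins, and the gap it leaves open is the gap the paper is designed to close. You try to prove the bound directly by constructing $N$ disjointly supported test functions via a Grigor'yan--Netrusov--Yau annulus decomposition, attacking the signed potential head-on. The paper instead reduces the signed case to the non-negative case and then \emph{cites} the GNY bound as a black box: with $k=\mathcal{N}(V)+1$ one maximizes $\int_M W\,d\mu$ over $\{W:\lambda_k(W)\ge 0,\ \|W\|_\infty\le N\}$, shows the maximizer $V_{\max}$ exists (weak compactness), satisfies $\lambda_k(V_{\max})=0$, and -- this is the heart of the paper, Lemma \ref{crucial} -- is a.e.\ non-negative. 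Then $\int_M V\,d\mu\le\int_M V_{\max}\,d\mu$, while $\mathcal{N}(V_{\max})\le k-1=\mathcal{N}(V)$, and applying the known non-negative-case bound to $V_{\max}$ gives (\ref{NV}). The non-negativity of the maximizer is established by a perturbation argument using the Poisson-kernel comparison Lemma \ref{measureMani} and Kato's eigenvalue perturbation formula, not by any packing construction.

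The concrete gap in your proposal is the one you yourself flag in the last paragraph: you need the cores to capture a fair share of $\int_M V$ while the collars simultaneously dodge the mass of $V_-$, and you acknowledge that running the GNY decomposition against $V_+\,d\mu$ is both the obvious move and a non-starter (it would prove a statement with $\int_M V_+$ on the right, which is false). You do not explain how to make the decomposition ``adaptive enough to do both at once,'' and this is not a technicality -- it is precisely the difficulty that a direct proof has to overcome and that nobody has overcome this way. Your $n>2$ heuristic (balls of radius $r\asymp(\mu(M)/I)^{1/2}$ capture on average $\asymp r^{n-2}$ of $V$-mass, beating capacity) is an averaging argument that ignores the possibility that $V$ is large and positive on a thin set and large and negative elsewhere, exactly the configuration where averages are useless; and even in the favorable case you would need a covering lemma that selects balls where the \emph{signed} integral is comparable to the average, which is not what the GNY covering machinery provides. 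Absent a resolution of this, the proposal is a plausible research program rather than a proof, and it is a program the authors evidently chose not to pursue because the variational reduction is both shorter and sharper (it inherits the GNY constant verbatim).

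One smaller point: your appeal to conformal covers and conformal volume to control the constant is consistent with where the dependence on genus (for $n=2$) and conformal class (for $n>2$) ultimately comes from, but in the paper all of that lives inside the cited GNY result; your proof would have to redo it from scratch while also juggling the signed potential, which compounds the difficulty rather than isolating it.
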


In the case $V\geq 0$ the estimate (\ref{NV}) was proved in \cite[Theorems
5.4 and Example 5.12]{GNY}. Our main contribution is the proof of (\ref{NV})
for signed potentials $V$ (as it was conjectured in \cite{GNY}), with the
same constant $C$ as in \cite{GNY}. In fact, we reduce the case of a signed $%
V$ to the case of non-negative $V$ by considering a certain variational
problem for $V$ and by showing that the solution of this problem is
non-negative. The latter method originates from \cite{NS}.

In the case $n=2$, inequality (\ref{NV}) takes the form%
\begin{equation}
\mathcal{N}\left( V\right) \geq C\int_{M}Vd\mu .  \label{NV2}
\end{equation}%
For example, the estimate (\ref{NV2}) can be used in the following
situation. Let $M$ be a two-dimensional manifold embedded in $\mathbb{R}^{3}$
and the potential $V$ be of the form $V=\alpha K+\beta H$ where $K$ is the
Gauss curvature, $H$ is the mean curvature, and $\alpha ,\beta $ are real
constants (see \cite{Harrell}, \cite{Soufi}). In this case (\ref{NV2})
yields 
\begin{equation*}
\mathcal{N}\left( V\right) \geq C\left( K_{total}+H_{total}\right) ,
\end{equation*}%
where $K_{total}$ is the total Gauss curvature and $H_{total}$ is the total
mean curvature. We expect in the future many other applications of (\ref{NV}%
)-(\ref{NV2}).

\section{A variational problem}

Fix positive integers $k,N$ and consider the following optimization problem:
find $V\in L^{\infty }\left( M\right) $ such that%
\begin{equation}
\int_{M}Vd\mu \rightarrow \max \ \text{under restrictions }\lambda
_{k}\left( V\right) \geq 0\ \text{and\ }\left\Vert V\right\Vert _{L^{\infty
}}\leq N.  \label{optim}
\end{equation}%
Clearly, the functional $V\mapsto \int_{M}Vd\mu $ is weakly continuous in $%
L^{\infty }\left( M\right) $. Since the class of potentials $V$ satisfying
the restrictions in (\ref{optim}) is bounded in $L^{\infty }\left( M\right) $%
, it is weakly precompact in $L^{\infty }\left( M\right) $. In fact, we
prove in the next lemma that this class is weakly compact, which will imply
the existence of the solution of (\ref{optim}).

\begin{lemma}
The class 
\begin{equation*}
C_{k,N}=\left\{ V\in L^{\infty }\left( M\right) :\lambda _{k}\left( V\right)
\geq 0\text{ and }\left\Vert V\right\Vert _{L^{\infty }}\leq N\right\}
\end{equation*}%
is weakly compact in $L^{\infty }\left( M\right) $. Consequently, the
problem \emph{(\ref{optim})} has a solution $V\in L^{\infty }(M)$.
\end{lemma}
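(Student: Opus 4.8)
The plan is to exhibit $C_{k,N}$ as a closed subset of a weakly compact ball and read off compactness. Write $B_{N}=\{V\in L^{\infty}(M):\|V\|_{L^{\infty}}\le N\}$; since $M$ is compact, $1\in L^{1}(M)$, so $V\mapsto\int_{M}Vd\mu$ is continuous in the topology $\sigma(L^{\infty}(M),L^{1}(M))$ used here, and by Banach--Alaoglu $B_{N}$ is compact in it. Moreover $L^{1}(M)$ is separable, so this topology is metrizable on $B_{N}$ and it suffices to show that $C_{k,N}$ is sequentially closed in $B_{N}$. So let $V_{j}\in C_{k,N}$ with $V_{j}\rightharpoonup V$; lower semicontinuity of the norm gives $\|V\|_{L^{\infty}}\le N$, and it remains to prove $\lambda_{k}(V)\ge0$.

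The tool is the Rayleigh--Ritz (min--max) formula, valid for every $W\in L^{\infty}(M)$ because such $W$ is an infinitesimally form-bounded perturbation of $-\Delta$ with form domain $H^{1}(M)$, and $H^{1}(M)\hookrightarrow L^{2}(M)$ compactly:
\begin{equation*}
\lambda_{k}(W)=\inf_{\substack{E\subseteq H^{1}(M) \\ \dim E=k}}R_{W}(E),\qquad R_{W}(E):=\sup_{0\neq u\in E}\frac{\int_{M}|\nabla u|^{2}d\mu-\int_{M}Wu^{2}d\mu}{\int_{M}u^{2}d\mu}.
\end{equation*}
Fix a $k$-dimensional $E\subseteq H^{1}(M)$ with basis $\phi_{1},\dots,\phi_{k}$. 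Each product $\phi_{i}\phi_{l}$ lies in $L^{1}(M)$ (Cauchy--Schwarz, since $\phi_{i}\in L^{2}(M)$), so $\int_{M}V_{j}\phi_{i}\phi_{l}d\mu\to\int_{M}V\phi_{i}\phi_{l}d\mu$ for all $i,l$. Writing the Rayleigh quotient on $E$ in the coordinates $u=\sum_{i}c_{i}\phi_{i}$, the symmetric matrix of its numerator converges entrywise while the (fixed, positive-definite) Gram matrix of its denominator does not move; since the top generalized eigenvalue of such a pair depends continuously on the entries, $R_{V_{j}}(E)\to R_{V}(E)$ as $j\to\infty$, for this fixed $E$.

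Now the conclusion is immediate: $\lambda_{k}(V_{j})\le R_{V_{j}}(E)$ for every $j$, so $\limsup_{j}\lambda_{k}(V_{j})\le R_{V}(E)$, and taking the infimum over all $k$-dimensional $E\subseteq H^{1}(M)$ gives $\limsup_{j}\lambda_{k}(V_{j})\le\lambda_{k}(V)$. As $\lambda_{k}(V_{j})\ge0$ for all $j$, this forces $\lambda_{k}(V)\ge0$, so $V\in C_{k,N}$; hence $C_{k,N}$ is closed in $B_{N}$ and therefore weakly compact. Since $0\in C_{k,N}$ (the eigenvalues of $-\Delta$ on a compact manifold are non-negative), $C_{k,N}$ is a nonempty weakly compact set, and the weakly continuous functional $V\mapsto\int_{M}Vd\mu$ attains its maximum on it, yielding a solution of (\ref{optim}). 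The one delicate point is the exchange of the two limits in the min--max: there is no uniformity in $E$ of the convergence $R_{V_{j}}(E)\to R_{V}(E)$, which is precisely why one must freeze a finite-dimensional test space before sending $j\to\infty$ and only then take the infimum over $E$; this yields upper semicontinuity of $\lambda_{k}$ along weakly convergent sequences, which is exactly (and only) what the argument requires.
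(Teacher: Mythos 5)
Your proof is correct and structurally the same as the paper's: both establish weak-$*$ closedness of $C_{k,N}$ via the min--max characterization of $\lambda_{k}$, and the essential move in both is to freeze a $k$-dimensional test space $E$ before letting $j\to\infty$, so that only finite-dimensional information is at play. Where you differ is in the mechanism for passing to the limit over a fixed $E$: the paper selects near-optimal unit vectors $u_{i}\in E$ for each $V_{i}$, uses compactness of the unit sphere of the finite-dimensional $E$ to extract a $W^{1,2}$-convergent subsequence, and passes to the limit in the resulting integral inequality; you instead fix a basis $\phi_{1},\dots,\phi_{k}$, note $\phi_{i}\phi_{l}\in L^{1}(M)$, observe that the numerator matrix of the Rayleigh quotient converges entrywise while the Gram matrix is fixed, and conclude $R_{V_{j}}(E)\to R_{V}(E)$ by continuity of the top generalized eigenvalue of a symmetric pencil. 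Your version is slightly cleaner in that it avoids the subsequence extraction entirely, and you are a bit more careful on two points the paper leaves implicit: you invoke separability of $L^{1}(M)$ to justify testing weak-$*$ closedness with sequences, and you note nonemptiness of $C_{k,N}$ (via $0\in C_{k,N}$) before invoking weak-$*$ continuity of $V\mapsto\int_{M}V\,d\mu$ to extract a maximizer. Both proofs yield the same upper semicontinuity of $\lambda_{k}$ along weak-$*$ convergent sequences, which is all that is needed.
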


\begin{proof}
It was already mentioned that the class $C_{k,N}$ is weakly precompact in $%
L^{\infty }\left( M\right) $. It remains to prove that it is weakly closed,
that is, for any sequence $\left\{ V_{i}\right\} \subset C_{k,N}$ that
converges weakly in $L^{\infty }$, the limit $V$ is also in $C_{k.N}$. The
condition $\left\Vert V\right\Vert _{L^{\infty }}\leq N$ is trivially
satisfied by the limit potential, so all we need is to prove that $\lambda
_{k}\left( V\right) \geq 0.$ Let us use the minmax principle in the
following form:%
\begin{equation*}
\lambda _{k}\left( V\right) =\inf_{\substack{ E\subset W^{1,2}\left(
M\right)  \\ \dim E=k}}\sup_{u\in E\setminus \left\{ 0\right\} }\frac{%
\int_{M}\left\vert \nabla u\right\vert ^{2}d\mu -\int_{M}Vu^{2}d\mu }{%
\int_{M}u^{2}d\mu },
\end{equation*}%
where $E$ is a subspace of $W^{1,2}\left( M\right) $ of dimension $k$. The
condition $\lambda _{k}\left( V\right) \geq 0$ is equivalent then to the
following:%
\begin{equation}
\left. 
\begin{array}{l}
\forall E\subset W^{1,2}\left( M\right) \ \text{with }\dim E=k\ \ \ \
\forall \varepsilon >0\ \ \ \exists u\in E\setminus \left\{ 0\right\}  \\ 
\text{such that }\dint_{M}\left\vert \nabla u\right\vert ^{2}d\mu
-\dint_{M}Vu^{2}d\mu \geq -\varepsilon \dint_{M}u^{2}d\mu .%
\end{array}%
\right.   \label{Eu}
\end{equation}%
Fix a subspace $E\subset W^{1,2}\left( M\right) $ of dimension $k$ and some $%
\varepsilon >0$. Since $\lambda _{k}\left( V_{i}\right) \geq 0$, we obtain
that there exists $u_{i}\in E\setminus \left\{ 0\right\} $ such that 
\begin{equation}
\dint_{M}\left\vert \nabla u_{i}\right\vert ^{2}d\mu
-\dint_{M}V_{i}u_{i}^{2}d\mu \geq -\varepsilon \dint_{M}u_{i}^{2}d\mu .
\label{Eun}
\end{equation}%
Without loss of generality we can assume that $\left\Vert u_{i}\right\Vert
_{W^{1,2}\left( M\right) }=1$. Then the sequence $\left\{ u_{i}\right\} $
lies on the unit sphere in the finite-dimensional space $E$. Hence, it has a
convergent (in $W^{1,2}\left( M\right) $-norm) subsequence. We can assume
that the whole sequence $\left\{ u_{i}\right\} $ converges in $E$ to some $%
u\in E$ with $\left\Vert u\right\Vert _{W^{1,2}\left( M\right) }=1.$ It
remains to verify that $u$ satisfies the inequality (\ref{Eu}). By
construction we have%
\begin{equation*}
\dint_{M}\left\vert \nabla u_{i}\right\vert ^{2}d\mu \rightarrow
\dint_{M}\left\vert \nabla u\right\vert ^{2}d\mu \ \ \ \ \text{and\ \ \ \ }%
\dint_{M}u_{i}^{2}d\mu \rightarrow \dint_{M}u^{2}d\mu .
\end{equation*}%
Next we have%
\begin{eqnarray*}
\left\vert \dint_{M}V_{i}u_{i}^{2}d\mu -\dint_{M}Vu^{2}d\mu \right\vert 
&\leq &\left\vert \dint_{M}\left( V_{i}u_{i}^{2}-V_{i}u^{2}\right) d\mu
\right\vert +\left\vert \dint_{M}\left( V_{i}u^{2}-Vu^{2}\right) d\mu
\right\vert  \\
&\leq &N\left\Vert u_{i}-u\right\Vert _{L^{2}}^{2}+\left\vert \int_{M}\left(
V_{i}-V\right) u^{2}d\mu \right\vert .
\end{eqnarray*}%
By construction we have $\left\Vert u_{i}-u\right\Vert _{L^{2}}\rightarrow 0$
as $i\rightarrow \infty $. Since $u^{2}\in L^{1}\left( M\right) $, the $%
L^{\infty }$ weak convergence $V_{i}\rightharpoonup V$ implies that 
\begin{equation*}
\int_{M}\left( V_{i}-V\right) u^{2}d\mu \rightarrow 0\ \ \text{as }%
i\rightarrow \infty .
\end{equation*}%
Hence, the inequality (\ref{Eu}) follows from (\ref{Eun}).
\end{proof}

\begin{lemma}
\label{Lemlak=0}If $N$ is large enough (depending on $k$ and $M$) then any
solution $V$ of \emph{(\ref{optim})} satisfies $\lambda _{k}(V)=0.$
\end{lemma}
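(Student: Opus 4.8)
The plan is to argue by contradiction. Suppose $V$ is a solution of \emph{(\ref{optim})} with $\delta:=\lambda_{k}(V)>0$; I will produce an admissible potential $W$ with $\int_{M}W\,d\mu>\int_{M}V\,d\mu$, which is impossible by optimality of $V$ — unless $V$ is forced to be identically $N$, a case excluded once $N$ is taken large enough.

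First I would record two elementary monotonicity properties of $\lambda_{k}$, both immediate from the minmax formula used in the proof of the previous lemma: (i) if $V_{1}\geq V_{2}$ a.e.\ then $\lambda_{k}(V_{1})\leq \lambda_{k}(V_{2})$, since the relevant Rayleigh quotients can only decrease when the potential increases; and (ii) for a constant $t\in\mathbb{R}$ one has $\lambda_{k}(V+t)=\lambda_{k}(V)-t$, since adding $t$ to the potential shifts the whole spectrum of $-\Delta-V$ by $-t$. In particular, for a constant potential $V\equiv t$ this gives $\lambda_{k}(t)=\lambda_{k}(0)-t$, where $\lambda_{k}(0)$ is the $k$-th eigenvalue of $-\Delta$ on $M$.

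The heart of the matter is the perturbation step. Assume $V$ is optimal, $\delta=\lambda_{k}(V)>0$, and $V\not\equiv N$ on $M$. Then $\mu(\{V<N\})>0$, so by continuity of $\mu$ along increasing unions there is $\eta\in(0,\delta]$ with $\Omega:=\{x\in M:V(x)<N-\eta\}$ of positive measure. Set $W:=V+\eta\,\mathbf{1}_{\Omega}\in L^{\infty}(M)$. On $\Omega$ we have $W<N$, and on $M\setminus\Omega$ we have $W=V$, so $\|W\|_{L^{\infty}}\leq N$; moreover $V\leq W\leq V+\eta$ pointwise, so (i)--(ii) give $\lambda_{k}(W)\geq \lambda_{k}(V+\eta)=\delta-\eta\geq 0$. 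Hence $W\in C_{k,N}$, while $\int_{M}W\,d\mu=\int_{M}V\,d\mu+\eta\,\mu(\Omega)>\int_{M}V\,d\mu$, contradicting the optimality of $V$. Therefore an optimal $V$ with $\lambda_{k}(V)>0$ must satisfy $V\equiv N$ a.e.

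To conclude, if $V\equiv N$ then $\lambda_{k}(V)=\lambda_{k}(0)-N$, so choosing $N\geq \lambda_{k}(0)$ — a threshold depending only on $k$ and $M$ — forces $\lambda_{k}(V)\leq 0$, contradicting $\lambda_{k}(V)>0$. Thus, for such $N$, a solution $V$ of \emph{(\ref{optim})} satisfies $\lambda_{k}(V)\geq 0$ by admissibility and cannot satisfy $\lambda_{k}(V)>0$; hence $\lambda_{k}(V)=0$. The one delicate point — and the reason the naive uniform shift $V\mapsto V+t$ does not work and a largeness assumption on $N$ is genuinely needed — is the $L^{\infty}$ constraint: the upward perturbation must be confined to the region where $V$ stays strictly below $N$, which is exactly what isolates the constant potential $N$ as the single exceptional configuration.
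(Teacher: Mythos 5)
Your proof is correct and follows the same overall strategy as the paper's: perturb the potential upward where it is strictly below $N$, check admissibility, contradict maximality, thereby reduce to $V\equiv N$, and exclude that case by taking $N\geq\lambda_k(-\Delta)$. The implementation differs in one small but pleasant way. The paper uses the convex combination $V_t=(1-t)V+tN$ and appeals to continuity of $t\mapsto\lambda_k(V_t)$ to keep $\lambda_k(V_t)>0$ for small $t$; you instead use the truncated additive bump $W=V+\eta\,\mathbf{1}_{\Omega}$ with $\Omega=\{V<N-\eta\}$ and $\eta\leq\delta$, and obtain the admissibility bound $\lambda_k(W)\geq\lambda_k(V+\eta)=\delta-\eta\geq 0$ directly from the monotonicity $V_1\geq V_2\Rightarrow\lambda_k(V_1)\leq\lambda_k(V_2)$ together with the exact spectral shift $\lambda_k(V+t)=\lambda_k(V)-t$. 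This replaces the continuity appeal by an explicit quantitative estimate, which is arguably cleaner; the paper's $V_t$ could of course be estimated the same way since $V\leq V_t\leq V+2Nt$. Your closing remark that the $L^\infty$ constraint is what forces the perturbation to be localized, and hence isolates the constant potential $N$ as the only obstruction, correctly pinpoints why a largeness hypothesis on $N$ is genuinely needed.
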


\begin{proof}
Assume that $\lambda _{k}(V)>0$ and bring this to a contradiction. Consider
the family of potentials 
\begin{equation*}
V_{t}=(1-t)V+tN\text{\ \ where}\,\,t\in \lbrack 0,1].
\end{equation*}%
Since $V_{t}\geq V$, we have by a well-known property of eigenvalues that $%
\lambda _{k}(V_{t})\leq \lambda _{k}(V)$. By continuity we have, for small
enough $t$, that $\lambda _{k}(V_{t})>0$. Clearly, we have also $\left\vert
V_{t}\right\vert \leq N$. Hence, $V_{t}$ satisfies the restriction of the
problem (\ref{optim}), at least for small $t$. If $\mu \left\{ V<N\right\}
>0 $ then we have for all $t>0$ 
\begin{equation*}
\int_{M}V_{t}>\int_{M}V,
\end{equation*}%
which contradicts the maximality of $V$. Hence, we should have $V=N$ $%
\mathrm{a.e.}$. However, if $N>\lambda _{k}\left( -\Delta \right) $ then $%
\lambda _{k}\left( -\Delta -N\right) <0$ and $V\equiv N$ cannot be a solution
of (\ref{optim}). This contradiction finishes the proof.
\end{proof}

\section{Proof of Theorem \protect\ref{main}}

The main part of the proof of Theorem \ref{main} is contained in the
following lemma.

\begin{lemma}
\label{crucial} Let $V_{\max }$ be a maximizer of the variational problem (%
\ref{optim}). Then $V_{\max }$ satisfies the inequality 
\begin{equation*}
V_{\max }\geq 0\,\,\,\mathrm{a.e.}\,\text{on\ }\,M
\end{equation*}
\end{lemma}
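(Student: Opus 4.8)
The strategy is to argue by contradiction: assume the set $\Omega = \{V_{\max} < 0\}$ has positive measure, and construct a competitor potential $\widetilde V$ that still satisfies $\lambda_k(\widetilde V) \geq 0$ and $\|\widetilde V\|_{L^\infty} \leq N$ but has strictly larger integral, contradicting maximality. The natural candidate is to push the negative part of $V_{\max}$ up toward zero, i.e. $\widetilde V = V_{\max} + t\,\mathbf 1_\Omega$ for small $t>0$, or more generally $\widetilde V = \max(V_{\max}, -s)$ for small $s>0$. The $L^\infty$ bound is then automatic, and clearly $\int_M \widetilde V\,d\mu > \int_M V_{\max}\,d\mu$, so the entire difficulty is to show $\lambda_k(\widetilde V) \geq 0$ — and since $\widetilde V \geq V_{\max}$ this would normally go the wrong way. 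Hence the perturbation must be compensated: we should subtract mass elsewhere, on the set where $V_{\max}$ is positive or zero, using information about the eigenfunctions.

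The key step is to use the eigenfunctions associated with $\lambda_k(V_{\max}) = 0$ (Lemma~\ref{Lemlak=0}). Let $u_1, \dots, u_m$ span the kernel of $-\Delta - V_{\max}$ (the eigenspace for the eigenvalue $0$), assuming $\lambda_k = 0$ is attained with multiplicity $m$ and $\lambda_{k-1} < 0 \leq \lambda_k = 0$. First-order perturbation theory says that adding $t W$ to the potential changes the relevant eigenvalues to first order by the eigenvalues of the $m\times m$ matrix $\big(\int_M W u_i u_j\, d\mu\big)$. For $\lambda_k(\widetilde V)$ to stay $\geq 0$ under $W = \mathbf 1_\Omega - (\text{something supported off }\Omega)$, we need this matrix to be negative semidefinite, i.e. $\int_\Omega W u^2\,d\mu \leq \int_{M\setminus\Omega}(\cdots)$ for every $u$ in the kernel. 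The plan is: if on the contrary one can always find an admissible direction $W$ with $\int_M W\,d\mu > 0$ and $\int_M W u^2\,d\mu \leq 0$ for all kernel eigenfunctions $u$, then a genuine (not merely first-order) perturbation argument — solving an ODE/flow $\dot V_t = W_t$ and tracking $\lambda_k(V_t)$, or an implicit-function-type correction — produces a strictly better competitor. So the real content is a \emph{separation / convexity} argument showing that the obstruction "$\lambda_k = 0$ cannot be improved in the $\int V$ direction" forces a linear relation among the measures $u_i u_j\,d\mu$ and $d\mu$ on $\Omega$, and ultimately that $\Omega$ must be null.

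Concretely, I would set it up as follows. Suppose $\mu(\Omega) > 0$. Consider perturbations $V_s = V_{\max} + s\varphi$ with $\varphi \geq 0$ supported in $\Omega$ (so $\|V_s\|_\infty \leq N$ for small $s$, since $V_{\max} < 0$ on $\Omega$) and $\int_M \varphi\, d\mu > 0$. Maximality forces $\lambda_k(V_s) < 0$ for all small $s > 0$, hence, since $\lambda_k(V_0) = 0$ and $\lambda_{k-1}(V_0) < 0$, the derivative condition gives: the matrix $\big(\int_\Omega \varphi\, u_i u_j\,d\mu\big)_{i,j}$ must have a nonnegative eigenvalue for \emph{every} admissible $\varphi$ — wait, actually it must be such that $\lambda_k$ decreases, so $\frac{d}{ds}\lambda_k \leq 0$, which by the perturbation formula means the \emph{largest} eigenvalue of that matrix is $\geq 0$, i.e. there exists $u$ in the kernel with $\int_\Omega \varphi\, u^2\,d\mu \geq 0$ — but that is automatic since $\varphi \geq 0$. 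So this naive direction gives no contradiction, which tells me the argument must be subtler: one must simultaneously \emph{lower} $V_{\max}$ on a set where the kernel eigenfunctions vanish or are small, absorbing the $\lambda_k$ cost while the $\int V$ gain from $\Omega$ dominates. The main obstacle, and the heart of the proof, is precisely this balancing act: showing that unless $V_{\max} \geq 0$ a.e., one can find a compensating variation that keeps $\lambda_k \geq 0$ yet strictly increases $\int_M V\,d\mu$. I expect the proof to exploit that on $\Omega$ we have strict inequality $V_{\max} < 0$ hence room to move, combined with a careful choice of a finite-dimensional family of perturbations and a topological/degree or Borsuk--Ulam type argument (or a direct Lagrange-multiplier analysis of the optimality conditions for (\ref{optim})) to extract the sign of $V_{\max}$; this is the step I would spend the most effort on, and where I would borrow the technique of \cite{NS} alluded to in the introduction.
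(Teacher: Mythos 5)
Your framework matches the paper's, and you correctly identify the central difficulty: the naive perturbation $V_{\max} + t\mathbf{1}_\Omega$ raises $\int V$ but lowers $\lambda_k$, so one must compensate by lowering $V_{\max}$ off $\Omega$ while controlling the first-order effect on $\lambda_k$ through the zero-eigenspace $U$. But you stop precisely at the nontrivial step, and your conjectured filler (Borsuk--Ulam, degree theory, Lagrange multipliers) is not what the paper does. The actual mechanism is PDE- and potential-theoretic.

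The missing idea: each $u\in U$ satisfies $\Delta u + V_{\max}u = 0$, hence $v = u^2$ satisfies $\Delta v + 2V_{\max} v = 2|\nabla u|^2 \geq 0$. On $F=\{V_{\max}\le -c\}$ this is a Schr\"odinger inequality $\Delta v + Wv\ge 0$ whose potential $W$ is strongly negative off a set of small measure, and Lemma~\ref{measureMani} (the technical heart of the proof: a quantitative, \emph{strict} sub-mean-value inequality for positive solutions of $\Delta v + Wv\geq 0$, valid when $W\le N$ everywhere and $W \le -1/N$ outside a set of measure $\le\varepsilon(\Omega,N,x_0)$) then yields, at each Lebesgue point $x$ of $F$, a probability density $q_x$ with
\begin{equation*}
u^2(x) < \int_M u^2 q_x\,d\mu \quad \text{for all } u\in U\setminus\{0\}.
\end{equation*}
A balayage argument (Zorn's lemma on probability measures ordered by $\int u^2 d\nu$, $u\in U$) produces a bounded density $\varphi\ge 0$ vanishing on the set $F_L$ of Lebesgue points of $F$, with $\int_M\varphi\,d\mu = 1$ and $\int_M u^2\varphi_0\,d\mu < \int_M u^2\varphi\,d\mu$ for all $u\in U\setminus\{0\}$, where $\varphi_0 = \mu(F_L)^{-1}\mathbf 1_{F_L}$. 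The competitor is $V_t = V_{\max} + t\varphi_0 - t\varphi$: it has $\int_M V_t\,d\mu = \int_M V_{\max}\,d\mu$, satisfies $|V_t|\le N$ for small $t > 0$, and by Kato's perturbation formula (Lemma~\ref{Lemlk}) applied to the negative-definite form $Q(u,u)=\int_M u^2(\varphi_0-\varphi)\,d\mu$ one gets $\lambda_k(V_t) > 0$ for small $t > 0$, contradicting Lemma~\ref{Lemlak=0}. (Your version of the endgame --- strictly increase $\int V$ while keeping $\lambda_k\geq 0$ --- would also close the argument once $\lambda_k(V_t)>0$, but the paper's appeal to Lemma~\ref{Lemlak=0} is cleaner since $\int V_t$ is held equal to $\int V_{\max}$.) In short: the gap in your plan is the sub-mean-value estimate for $u^2$ and the balayage step that turns it into an admissible compensating direction; both are concrete PDE/potential theory, not a topological fixed-point argument.
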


\subsection{Proof of Theorem \protect\ref{main} assuming Lemma \protect\ref%
{crucial}}

Choose $N$ large enough, say 
\begin{equation*}
N>\sup_{M}|V|.
\end{equation*}%
Set $k=\mathcal{N}(V)+1$ so that $\lambda _{k}(V)\geq 0$. For the maximizer $%
V_{\max }$ of (\ref{optim}) we have 
\begin{equation*}
\int_{M}V\,d\mu \leq \int_{M}V_{\max }\,d\mu .
\end{equation*}%
On the other hand, since $V_{\max }\geq 0$, we have by \cite{GNY} 
\begin{equation*}
\mathcal{N}(V_{\max })\geq \frac{C}{\mu \left( M\right) ^{n/2-1}}\left(
\int_{M}V_{\max }\,d\mu \right) ^{n/2}.
\end{equation*}%
Also , we have 
\begin{equation*}
\lambda _{k}(V_{\max })\geq 0,
\end{equation*}%
which implies 
\begin{equation*}
\mathcal{N}(V_{\max })\leq k-1=\mathcal{N}(V).
\end{equation*}%
Hence, we obtain 
\begin{equation*}
\mathcal{N}(V)\geq \mathcal{N}(V_{\max })\geq \frac{C}{\mu \left( M\right)
^{n/2-1}}\left( \int_{M}V_{\max }\,d\mu \right) ^{n/2}\geq \frac{C}{\mu
\left( M\right) ^{n/2-1}}\left( \int_{M}V\,d\mu \right) _{+}^{n/2},
\end{equation*}%
which was to be proved.

\subsection{Some auxiliary results}

Before we can prove Lemma \ref{crucial}, we need some auxiliary lemmas. The
following lemma can be found in \cite{Kato}.

\begin{lemma}
\label{Lemlk}Let $V\left( t,x\right) $ be a function on $\mathbb{R}\times M$
such that, for any $t\in \mathbb{R}$, $V\left( t,\cdot \right) \in L^{\infty
}\left( M\right) $ and $\partial _{t}V(t,\cdot )\in L^{\infty }\left(
M\right) $. For any $t\in \mathbb{R}$, consider the Schr\"{o}dinger operator 
$L_{t}=-\Delta -V\left( t,\cdot \right) $ on $M$ and denote by $\left\{
\lambda _{k}\left( t\right) \right\} _{k=1}^{\infty }$ the sequence of the
eigenvalues of $L_{t}$ counted with multiplicities and arranged in
increasing order. Let $\lambda $ be an eigenvalue of $L_{0}$ with
multiplicity $m$; moreover, let%
\begin{equation*}
\lambda =\lambda _{k+1}\left( 0\right) =...=\lambda _{k+m}\left( 0\right) .
\end{equation*}%
Let $U_{\lambda }$ be the eigenspace of $L_{0}$ that corresponds to the
eigenvalue $\lambda $ and $\left\{ u_{1},...,u_{m}\right\} $ be an
orthonormal basis in $U_{\lambda }$. Set for all $i,j=1,...,m$%
\begin{equation*}
Q_{ij}=\int_{M}\left. \frac{\partial V}{\partial t}\right\vert
_{t=0}u_{i}u_{j}d\mu .
\end{equation*}%
and denote by $\left\{ \alpha _{i}\right\} _{i=1}^{m}$ the sequence of the
eigenvalues of the matrix $\left\{ Q\right\} _{i,j=1}^{m}$ counted with
multiplicities and arranged in increasing order. Then we have the following
asymptotic, for any $i=1,...,m$, 
\begin{equation*}
\lambda _{k+i}(t)=\lambda _{k+i}(0)-t\alpha _{i}+o(t)\ \text{as }%
t\rightarrow 0.
\end{equation*}
\end{lemma}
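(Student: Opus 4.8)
\textbf{Plan for proving Lemma~\ref{Lemlk}.} The statement is the classical analytic (Rellich--Kato) perturbation theory for a family of self-adjoint operators depending smoothly on a real parameter $t$, specialized to Schr\"odinger operators $L_t=-\Delta-V(t,\cdot)$ on a compact manifold. The plan is to reduce the statement to the standard finite-dimensional perturbation result for a symmetric matrix, using the Riesz spectral projection onto the group of eigenvalues near $\lambda$.

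\textbf{Step 1: Set-up and analyticity/smoothness of the family.} First I would observe that $L_t$ is, for each $t$, a self-adjoint operator on $L^2(M)$ with the same domain $W^{2,2}(M)$ (or form domain $W^{1,2}(M)$), since $V(t,\cdot)\in L^\infty(M)$ makes $-V(t,\cdot)$ a bounded, indeed $\Delta$-bounded with relative bound $0$, perturbation of $-\Delta$. The hypothesis $\partial_t V(t,\cdot)\in L^\infty(M)$ gives that $t\mapsto L_t$ is differentiable in the sense of bounded operators, with $\partial_t L_t = -\partial_t V(t,\cdot)$. This places us in the framework of Kato's book (Chapter VII), which is the reference cited; I would simply invoke that $L_t$ is a (real-)analytic, or at least $C^1$, family of type (A) in Kato's terminology. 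This is the step that justifies citing \cite{Kato} rather than redoing the abstract theory.

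\textbf{Step 2: Riesz projection and reduction to a matrix.} Since $\lambda$ is an isolated eigenvalue of $L_0$ of finite multiplicity $m$ (the spectrum of $L_0$ is discrete), choose a small circle $\Gamma$ in $\mathbb{C}$ around $\lambda$ enclosing no other eigenvalue of $L_0$. For $|t|$ small, $\Gamma$ stays in the resolvent set of $L_t$, and the Riesz projection
\begin{equation*}
P(t)=-\frac{1}{2\pi i}\oint_\Gamma (L_t-z)^{-1}\,dz
\end{equation*}
is well-defined, depends $C^1$ (analytically) on $t$, has constant rank $m$, and its range $U_\lambda(t)$ is exactly the span of the eigenfunctions of $L_t$ whose eigenvalues lie inside $\Gamma$, i.e. $\lambda_{k+1}(t),\dots,\lambda_{k+m}(t)$. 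One then transports everything back to the fixed space $U_\lambda=U_\lambda(0)$ by the Kato--Nagy transformation operator $W(t)$, a $C^1$ family of unitaries with $W(0)=\mathrm{Id}$, $W(t)U_\lambda=U_\lambda(t)$. The reduced operator
\begin{equation*}
A(t)=W(t)^{-1}P(t)L_t P(t)W(t)\big|_{U_\lambda}
\end{equation*}
is then a $C^1$ family of symmetric $m\times m$ matrices (in the orthonormal basis $u_1,\dots,u_m$) whose eigenvalues are precisely $\lambda_{k+1}(t),\dots,\lambda_{k+m}(t)$, and $A(0)=\lambda\,\mathrm{Id}$.

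\textbf{Step 3: First-order expansion of $A(t)$ and the matrix $Q$.} The remaining task is to compute $A'(0)$. Differentiating $A(t)$ at $t=0$, the terms coming from $W'(0)$ and $P'(0)$ drop out because $A(0)=\lambda\,\mathrm{Id}$ is a scalar and $P(0)$ acts as identity on $U_\lambda$: concretely $\frac{d}{dt}\big|_{0}\langle A(t)u_i,u_j\rangle = \langle \partial_t L_t|_{t=0}\,u_i,u_j\rangle + \lambda(\langle W'(0)u_i,u_j\rangle+\langle u_i,W'(0)u_j\rangle+\dots)$, and the bracketed corrections cancel by the identities $P W = W P_0$, $W(t)^*W(t)=\mathrm{Id}$ differentiated at $0$. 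Since $\partial_t L_t|_{t=0} = -\partial_t V|_{t=0}$, we get $\langle A'(0)u_i,u_j\rangle = -\int_M \partial_t V|_{t=0}\,u_iu_j\,d\mu = -Q_{ij}$. Therefore $A(t)=\lambda\,\mathrm{Id}-tQ+o(t)$. By the standard continuity/first-order perturbation theorem for eigenvalues of a $C^1$ symmetric matrix family (e.g. applied to $-A(t)/t$ near $t=0$, or directly: the eigenvalues of $\lambda\,\mathrm{Id}-tQ+o(t)$ arranged in increasing order are $\lambda-t\alpha_i+o(t)$ where $\alpha_1\le\dots\le\alpha_m$ are the eigenvalues of $Q$), we conclude
\begin{equation*}
\lambda_{k+i}(t)=\lambda_{k+i}(0)-t\alpha_i+o(t),\qquad i=1,\dots,m.
\end{equation*}

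\textbf{Main obstacle.} The only genuine subtlety is Step 3 --- making sure the $o(t)$ terms from the $t$-dependence of the projection $P(t)$ and of the transformation $W(t)$ do not contribute to the first-order coefficient. This is exactly the point where one uses that at $t=0$ the reduced operator is a scalar multiple of the identity, which makes $P(0)$ and $W(0)$ ``invisible'' to first order; I would phrase this via the Feynman--Hellmann-type computation above rather than tracking $W'(0)$ explicitly. Everything else (discreteness of the spectrum, $\Delta$-boundedness of $V$, existence and smoothness of the Riesz projection) is routine and can be cited from \cite{Kato}.
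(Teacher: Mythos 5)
The paper gives no proof of this lemma at all --- it simply cites Kato's book --- and your sketch is exactly the standard Riesz-projection/degenerate perturbation argument from that reference (reduction to the $m\times m$ family $A(t)$ with $A(0)=\lambda\,\mathrm{Id}$, so that the $P'(0)$ and $W'(0)$ terms drop out and $A'(0)=-Q$), so it is correct and consistent with the paper's treatment. The only caveat is cosmetic and shared with the lemma as stated: for $t>0$ the increasing enumeration actually pairs $\lambda_{k+i}(t)$ with $\alpha_{m+1-i}$ rather than $\alpha_i$, but as a set of first-order branches the conclusion is the same and this is immaterial for how the lemma is used later in the paper.
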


The following lemma is multi-dimensional extension of \cite[Lemmas 3.4,3.6]%
{NS}. Given a connected open subset $\Omega $ of $M$ with smooth boundary,
the Dirichlet problem%
\begin{equation*}
\left\{ 
\begin{array}{l}
\Delta u=0 \,\,\,mbox{in}\,\,\Omega\\ 
u|_{\partial \Omega }=f%
\end{array}%
\right.
\end{equation*}%
has for any $f\in C\left( \partial \Omega \right) $ a unique solution that
can be represented in the form%
\begin{equation*}
u\left( y\right) =\int_{\partial \Omega }Q\left( x,y\right) f\left( x\right)
d\sigma \left( x\right)
\end{equation*}%
for any $y\in \Omega $, where $Q\left( x,y\right) $ is the Poisson kernel of
this problem and $\sigma $ is the surface measure on $\partial \Omega $. For
any $y\in \Omega $, the function $q\left( x\right) =Q\left( x,y\right) $ on $%
\partial \Omega $ will be called the Poisson kernel at the source $y$. Note
that $q\left( x\right) $ is continuous, positive and 
\begin{equation*}
\int_{\partial \Omega }qd\sigma =1.
\end{equation*}

\begin{lemma}
\label{measureMani} Let $\Omega $ be a connected open subset of $M$ with
smooth boundary and $x_{0}$ be a point in $\Omega $. Then, for any constant $%
N\geq 1$ there exists $\varepsilon =\varepsilon \left( \Omega
,N,x_{0}\right) >0$ such that for any measurable set $E\subset \Omega $ with 
\begin{equation*}
\mu \left( E\right) \leq \varepsilon
\end{equation*}%
and for any positive solution $v\in C^{2}\left( \Omega \right) $ of the
inequality 
\begin{equation}
\Delta v+Wv\geq 0\ \text{in }\Omega ,  \label{schroMani}
\end{equation}%
where 
\begin{equation}
W=\left\{ 
\begin{array}{ll}
N & \text{in \ }E, \\ 
-\frac{1}{N} & \text{in\ }\Omega \setminus E,%
\end{array}%
\right.  \label{W1}
\end{equation}%
the following inequality holds%
\begin{equation}
v(x_{0})<\int_{\partial \Omega }v\,qd\sigma ,  \label{vx0}
\end{equation}%
where $q$ is the Poisson kernel of the Laplace operator at the source $x_{0}$%
.
\end{lemma}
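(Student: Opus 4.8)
The idea is to compare $v$ with the harmonic function $h$ in $\Omega$ having the same boundary values $h|_{\partial\Omega}=v|_{\partial\Omega}$, so that $h(x_{0})=\int_{\partial\Omega}v\,q\,d\sigma$ is exactly the right-hand side of (\ref{vx0}). Writing $w=v-h$, we have $w|_{\partial\Omega}=0$ and, from (\ref{schroMani}) and $\Delta h=0$, the inequality $\Delta w\geq -Wv$ in $\Omega$. The goal becomes showing $w(x_{0})<0$, i.e.\ that the (sub)solution $w$ of $-\Delta w\leq Wv$ with zero boundary data is strictly negative at $x_{0}$ once $\mu(E)$ is small. The natural tool is the Green function $G(\cdot,x_{0})$ of $\Omega$: formally
\begin{equation*}
w(x_{0})=-\int_{\Omega}G(x,x_{0})\,\Delta w(x)\,d\mu(x)\leq \int_{\Omega}G(x,x_{0})\,W(x)\,v(x)\,d\mu(x),
\end{equation*}
and then split the integral over $E$ (where $W=N$) and over $\Omega\setminus E$ (where $W=-1/N$).

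On $\Omega\setminus E$ the contribution is $-\tfrac1N\int_{\Omega\setminus E}G(x,x_{0})v(x)\,d\mu$, which is strictly negative and bounded away from $0$ by a fixed quantity depending only on $\Omega,x_{0}$ (and a lower bound for $v$, which we can arrange by first noting $v$ is bounded below on compact subsets of $\Omega$ by Harnack). On $E$ the contribution is $N\int_{E}G(x,x_{0})v(x)\,d\mu$, which we must show is small when $\mu(E)$ is small. Here the two dangers are that $v$ could be very large somewhere, or that $E$ could cluster near $x_{0}$ where $G$ blows up. The first danger is controlled by a global sup bound on $v$: from (\ref{schroMani}) with $W\leq N$, $v$ is a positive subsolution of $\Delta v+Nv\geq 0$, and a Harnack-type / mean-value argument (applied on $\Omega$ together with the boundary values) bounds $\sup_{\Omega}v$ by $C(\Omega,N)\sup_{\partial\Omega}v$; since everything is homogeneous in $v$ we may normalize $\sup_{\partial\Omega}v=1$. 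The second danger is handled by the integrability of the Green function: $G(\cdot,x_{0})\in L^{p}(\Omega)$ for some $p>1$ (indeed for all $p<n/(n-2)$ when $n>2$, and for all $p<\infty$ when $n=2$), so by Hölder $\int_{E}G(x,x_{0})\,d\mu\leq \|G(\cdot,x_{0})\|_{L^{p}}\,\mu(E)^{1/p'}\to 0$ as $\mu(E)\to 0$, uniformly in the location of $E$.

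Combining: choose $\varepsilon$ so small that $N\sup_{\Omega}v\cdot\|G(\cdot,x_{0})\|_{L^{p}}\,\varepsilon^{1/p'}$ is strictly less than the fixed positive lower bound for $\tfrac1N\int_{\Omega\setminus E}G v\,d\mu$ — note that once $\mu(E)\leq\varepsilon\leq\tfrac12\mu(\Omega)$ the latter integral is bounded below uniformly — and then $w(x_{0})<0$, which is (\ref{vx0}). The main technical obstacle is making the Green-function representation $w(x_{0})\leq\int G\,W v\,d\mu$ rigorous: $v$ is only assumed $C^{2}(\Omega)$ with no control up to $\partial\Omega$, and $G(\cdot,x_{0})$ is singular at $x_{0}$, so one should argue by exhausting $\Omega$ with smooth subdomains $\Omega_{j}\uparrow\Omega$ containing $x_{0}$, applying Green's identity on $\Omega_{j}$ (where the boundary terms involve $w$, which vanishes on $\partial\Omega$ only in the limit — alternatively replace $h$ by the harmonic function with boundary data $v|_{\partial\Omega_{j}}$ and pass to the limit), and controlling the limit using the uniform sup bound on $v$ and dominated convergence. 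A cleaner route avoiding boundary regularity issues is to work directly with $w$ as an element of $W^{1,2}_{0}$ is not available since $v$ need not be in $W^{1,2}$ globally; hence the exhaustion argument is, I expect, the step requiring the most care. Once the representation is in place, the quantitative smallness of $\varepsilon$ follows from the Hölder estimate on $\int_E G$ exactly as above.
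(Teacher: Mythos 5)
Your strategy — compare $v$ with the harmonic extension $h$ of its boundary data, represent $w=v-h$ via the Green function of $\Delta$, and split $\int_\Omega G\,Wv\,d\mu$ into the $E$ and $\Omega\setminus E$ contributions — is a genuinely different route from the paper's, but as written it has a gap that I do not see how to close without importing the paper's main idea.

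The gap is in the claim that the negative contribution $\tfrac1N\int_{\Omega\setminus E}Gv\,d\mu$ is ``bounded away from $0$ by a fixed quantity'' after a normalization, justified by saying that ``$v$ is bounded below on compact subsets of $\Omega$ by Harnack.'' The function $v$ is only assumed to satisfy $\Delta v+Wv\ge 0$, i.e.\ $v$ is a positive \emph{sub}solution. The Harnack inequality and the weak-Harnack lower bound hold for solutions and supersolutions respectively; for subsolutions one has a local \emph{maximum} estimate but no lower bound at all (for instance, in the application of this lemma $v=u^2$ with $u$ an eigenfunction, and $v$ degenerates to $0$ on the nodal set of $u$). A positive subsolution of $\Delta-\tfrac1N$ on $\Omega$ can be $O(1)$ near $\partial\Omega$ and arbitrarily small in any fixed compact subset (e.g.\ solutions with boundary data concentrating on a shrinking patch of $\partial\Omega$), so that $\int_\Omega G v\,d\mu\to 0$ under the normalization $\sup_{\partial\Omega}v=1$; the ``good'' term is \emph{not} uniformly bounded below. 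Since the inequality one must establish, $N\int_E Gv<\tfrac1N\int_{\Omega\setminus E}Gv$, is a \emph{ratio} comparison that degenerates exactly in this regime, the sketch does not yield an $\varepsilon$ depending only on $\Omega,N,x_0$.

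The paper sidesteps this by not comparing with the harmonic extension at all. It compares the Poisson kernel $q$ of $\Delta$ with the Poisson kernel $q_0$ of $\Delta-\tfrac1N$ in $\Omega$ and observes that $q_0\le(1-\eta)q$ for a fixed $\eta=\eta(\Omega,N,x_0)>0$; this built-in slack of relative size $\eta$ is what absorbs the perturbation coming from $E$. Moreover, instead of working with the subsolution $v$ directly, the paper replaces it by the positive solution $u\ge v$ of the \emph{full} boundary-value problem $\Delta u+W_\delta u=0$, $u|_{\partial\Omega}=v$, with a slightly enlarged positive set $W_\delta$. Because $u$ solves an equation, Harnack \emph{does} apply to $u$, producing the uniform bound $\sup_{\Omega_\delta}u\le C\int_{\partial\Omega}u\,d\sigma$ that makes the Green-function estimate of the perturbation quantitative and $v$-independent. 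These two devices — the $\eta$-gap between $q_0$ and $q$, and the auxiliary solution $u\ge v$ for which Harnack is available — are precisely what your sketch is missing, and I don't think your comparison with the harmonic $h$ can work without some substitute for them. Your remark about making the Green representation rigorous by exhausting $\Omega$ is a legitimate secondary concern, but it is not the essential difficulty.
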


\begin{proof}
For any $\delta >0$ denote by $A_{\delta }$ the set of points in $\Omega $
at the distance $\leq \delta $ from $\partial \Omega $ (see Fig. \ref{pic1})
and consider the potential $V_{\delta }$ in $\Omega $ defined by%
\begin{equation}
V_{\delta }=\left\{ 
\begin{array}{l}
N\ \ \ \ \text{in }A_{\delta }, \\ 
-\frac{1}{N}\ \text{in\ }\Omega \setminus A_{\delta }.%
\end{array}%
\right.  \label{Vd}
\end{equation}%
\FRAME{ftbpF}{4.3171in}{2.2269in}{0pt}{}{\Qlb{pic1}}{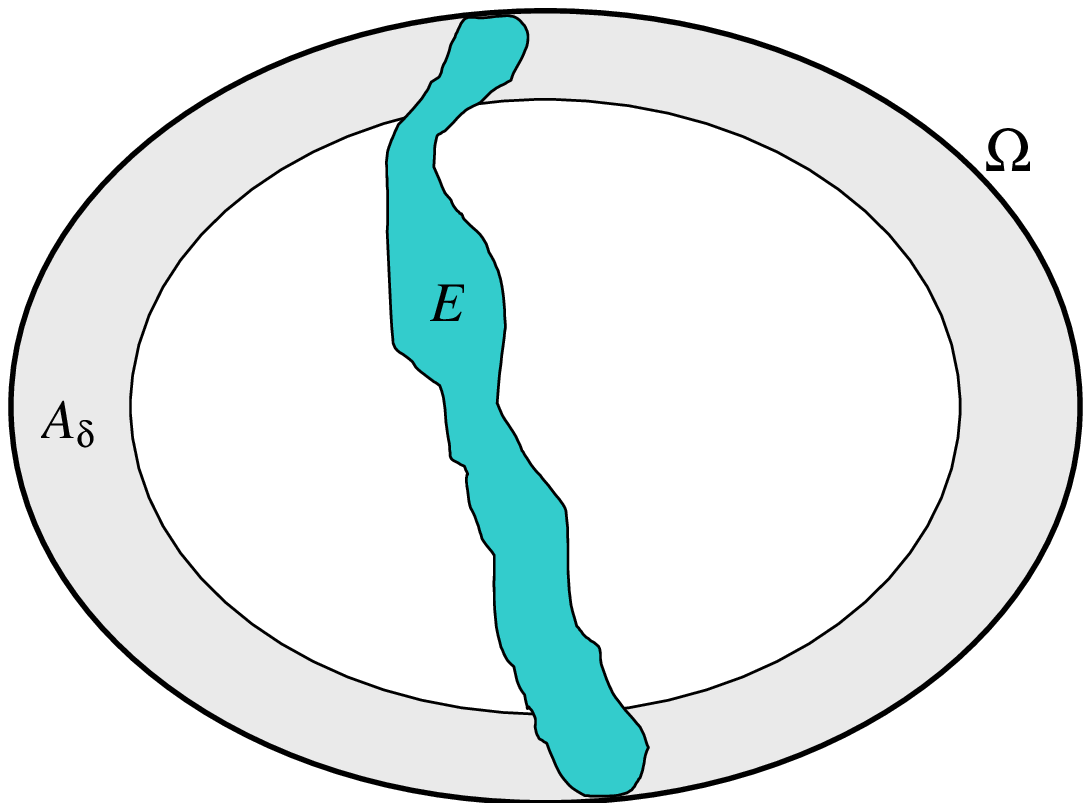}{\special%
{language "Scientific Word";type "GRAPHIC";maintain-aspect-ratio
TRUE;display "USEDEF";valid_file "F";width 4.3171in;height 2.2269in;depth
0pt;original-width 6.3027in;original-height 3.2379in;cropleft "0";croptop
"1";cropright "1";cropbottom "0";filename 'pic1.eps';file-properties
"XNPEU";}}

Since $\left\Vert V_{\delta }^{+}\right\Vert _{L^{p}\left( \Omega \right) }$
can be made sufficiently small by the choice of $\delta >0$, the following
boundary value problem has a unique positive solution:%
\begin{equation}
\left\{ 
\begin{array}{l}
\Delta w+V_{\delta }w=0\ \text{in }\Omega \\ 
w=f\ \text{on }\partial \Omega ,%
\end{array}%
\right.  \label{Wde}
\end{equation}%
for any positive continuous function $f$ on $\partial \Omega $. Denote by $%
q_{\delta }\left( x\right) $,$\ x\in \partial \Omega ,$ the Poisson kernel
of (\ref{Wde}) at the source $x_{0}$. Letting $\delta \rightarrow 0$, we
obtain that the solution of (\ref{Wde}) converges to that of%
\begin{equation}
\left\{ 
\begin{array}{l}
\Delta w-\frac{1}{N}w=0\ \text{in }\Omega \\ 
w=f\ \text{on }\partial \Omega .%
\end{array}%
\right.  \label{W0}
\end{equation}%
Denoting by $q_{0}$ the Poisson kernel of (\ref{W0}) at the source $x_{0}$,
we obtain that $q_{\delta }\searrow q_{0}$ on $\partial \Omega $ as $\delta
\searrow 0$ and, moreover, the convergence is uniform.

Let $q$ be the Poisson kernel of the Laplace operator $\Delta $ in $\Omega $%
, as in the statement of the theorem. Since any solution of (\ref{W0}) is
strictly subharmonic in $\Omega $, we obtain that $q_{0}<q$ on $\partial
\Omega $. In particular, there is a constant $\eta >0$ depending only on $%
\Omega ,N,x_{0}$ such that%
\begin{equation*}
q_{0}<\left( 1-\eta \right) q\ \text{on }\partial \Omega .
\end{equation*}%
Since the convergence $q_{\delta }\rightarrow q$ is uniform on $\partial
\Omega $, we obtain that, for small enough $\delta $ (depending on $\Omega
,N,x_{0}$),%
\begin{equation*}
q_{\delta }<\left( 1-\eta /2\right) q\ \text{on\ }\partial \Omega .
\end{equation*}%
Fix such $\delta $. Consequently, we obtain for the solution $w$ of (\ref%
{Wde}) that%
\begin{equation}
w\left( x_{0}\right) <\left( 1-\eta /2\right) \int_{\partial \Omega
}fqd\sigma .  \label{wx0}
\end{equation}

Note that the function $W$ from (\ref{W1}) can be increased without
violating  (\ref{schroMani}). Define a new potential $W_{\delta }$ by%
\begin{equation}
W_{\delta }=\left\{ 
\begin{array}{l}
N\ \ \text{in\ \ }A_{\delta }\cup E, \\ 
-\frac{1}{N}\ \ \text{in }\Omega \setminus A_{\delta }\setminus E.\ 
\end{array}%
\right.   \label{W2}
\end{equation}%
Observe that, for any $p>1$%
\begin{equation*}
\left\Vert W_{\delta }^{+}\right\Vert _{L^{p}\left( \Omega \right) }^{p}\leq
N^{p}\left( \mu \left( A_{\delta }\right) +\varepsilon \right) ,
\end{equation*}%
so that by the choice of $\varepsilon $ and further reducing $\delta $ this
norm can be made arbitrarily small. By a well-known fact (see \cite{LiebLoss}%
), if $\left\Vert \,W_{\delta }^{+}\right\Vert _{L^{p}\left( \Omega \right) }
$ is sufficiently small, then the operator $-\Delta -$\thinspace $W_{\delta }
$ in $\Omega $ with the Dirichlet boundary condition on $\partial \Omega $
is positive definite, provided $p=n/2$ for $n>2$ and $p>1$ for $n=2$.

So, we can assume that the operator $-\Delta -$\thinspace $W_{\delta }$ is
positive definite. In particular, the following boundary value problem%
\begin{equation}
\left\{ 
\begin{array}{l}
\Delta u+\,W_{\delta }u=0\ \text{in }\Omega \  \\ 
u|_{\partial \Omega }=v%
\end{array}%
\right.   \label{uv}
\end{equation}%
has a unique positive solution $u$. Comparing this with (\ref{schroMani})
and using the maximum principle for the operator $\Delta +$\thinspace $%
W_{\delta }$, we obtain $u\geq v$ in $\Omega $. Since $u=v$ on $\partial
\Omega $, the required inequality (\ref{vx0}) will follow if we prove that%
\begin{equation}
u\left( x_{0}\right) <\int_{\partial \Omega }uqd\sigma .  \label{ux0}
\end{equation}

Set $\Omega _{\delta }=\Omega \setminus A_{\delta }$ and prove that%
\begin{equation}
\sup_{\Omega _{\delta }}u\leq C\int_{\partial \Omega }ud\sigma ,
\label{supu}
\end{equation}%
for some constant $C$ that depends on $\Omega ,N,\delta ,n$. By choosing $%
\varepsilon $ and $\delta $ sufficiently small, the norm $\left\Vert
W_{\delta }\right\Vert _{L^{p}}$ can be made arbitrarily small for any $p$.
Hence, function $u$ satisfies the Harnack inequality 
\begin{equation}
\sup_{\Omega _{\delta }}u\leq C\int_{\Omega _{\delta }}ud\mu   \label{uu}
\end{equation}%
where $C$ depends on $\Omega ,N,\delta $ (see \cite{AizSim}, \cite{HanSchr}%
). Let $h$ be the solution of the following boundary value problem%
\begin{equation*}
\left\{ 
\begin{array}{l}
-\Delta h-W_{\delta }h=1_{\Omega _{\delta }}\ \text{in }\Omega  \\ 
h=0\ \text{on }\partial \Omega .%
\end{array}%
\right. 
\end{equation*}%
where $\Omega _{\delta }=\Omega \setminus A_{\delta }$. Since $\left\Vert
W_{\delta }\right\Vert _{L^{q}}$ is bounded for any $q$, we obtain by the
known a priori estimates, that%
\begin{equation*}
\left\Vert h\right\Vert _{W^{2,p}\left( \Omega \right) }\leq C\left\Vert
1_{\Omega _{\delta }}\right\Vert _{L^{p}\left( \Omega \right) },
\end{equation*}%
where $p>1$ is arbitrary and $C$ depends on $\Omega ,N,\delta ,p$ (see \cite%
{LadSolUr}). Choose $p>n$ so that by the Sobolev embedding%
\begin{equation*}
\left\Vert h\right\Vert _{C^{1}\left( \Omega \right) }\leq C\left\Vert
h\right\Vert _{W^{2,p}\left( \Omega \right) }.
\end{equation*}%
Since $\left\Vert 1_{\Omega _{\delta }}\right\Vert _{L^{p}\left( \Omega
\right) }$ is uniformly bounded, we obtain by combining the above estimates
that%
\begin{equation*}
\left\Vert h\right\Vert _{C^{1}\left( \Omega \right) }\leq C,
\end{equation*}%
with a constant $C$ depending on $\Omega ,N,\delta ,n.$ 

Multiplying the equation $-\Delta h-W_{\delta }h=1_{\Omega _{\delta }}\ $by $%
u$ and integrating over $\Omega $, we obtain%
\begin{equation*}
\int_{\Omega _{\delta }}ud\mu =\int_{\partial \Omega }\frac{\partial h}{%
\partial \nu }u~d\sigma \leq C\int_{\partial \Omega }ud\sigma 
\end{equation*}%
which together with (\ref{uu}) implies (\ref{supu}).

Let $w$ be the solution (\ref{Wde}) with the boundary condition $f=u$, that
is,%
\begin{equation*}
\left\{ 
\begin{array}{l}
\Delta w+V_{\delta }w=0\ \text{in }\Omega  \\ 
w=u\ \text{on }\partial \Omega .%
\end{array}%
\right. 
\end{equation*}%
Let us consider the difference%
\begin{equation*}
\varphi =u-w.
\end{equation*}%
Clearly, we have in $\Omega $%
\begin{equation*}
\Delta \varphi +V_{\delta }\varphi =\left( \Delta u+V_{\delta }u\right)
-\left( \Delta w+V_{\delta }w\right) =(V_{\delta }-\,W_{\delta })u
\end{equation*}%
and $\varphi =0$ on $\partial \Omega $. Denoting by $G_{V_{\delta }}$ the
Green function of the operator $-\Delta -V_{\delta }$ in $\Omega $ with the
Dirichlet boundary condition, we obtain%
\begin{equation*}
\varphi \left( x_{0}\right) =\int_{\Omega }G_{V_{\delta }}\left(
x_{0},y\right) \left( \,W_{\delta }-V_{\delta }\right) u\left( y\right) d\mu
\left( y\right) .
\end{equation*}%
Since we are looking for an upper bound for $\varphi \left( x_{0}\right) $,
we can restrict the integration to the domain $\left\{ V_{\delta }\leq
\,W_{\delta }\right\} $. By (\ref{W2}) and (\ref{Vd}) we have%
\begin{equation*}
\left\{ V_{\delta }\leq \,W_{\delta }\right\} =\left( \Omega \setminus
A_{\delta }\right) \cap \left( A_{\delta }\cup E\right) =E\setminus
A_{\delta }=:E^{\prime }
\end{equation*}%
and, moreover, on $E^{\prime }$ we have%
\begin{equation*}
\,W_{\delta }-V_{\delta }=N+\frac{1}{N}<2N,
\end{equation*}%
whence it follows that%
\begin{equation*}
\varphi \left( x_{0}\right) \leq 2N\int_{E^{\prime }}G_{V_{\delta }}\left(
x_{0},y\right) u\left( y\right) d\mu \left( y\right) .
\end{equation*}%
Using (\ref{supu}) to estimate here $u\left( y\right) $, we obtain%
\begin{equation*}
\varphi \left( x_{0}\right) \leq 2NC\left( \int_{E^{\prime }}G_{V_{\delta
}}\left( x_{0},y\right) d\mu \left( y\right) \right) \int_{\partial \Omega
}ud\sigma 
\end{equation*}%
Since $\mu \left( E^{\prime }\right) \leq \varepsilon $ and the Green
function $G_{V_{\delta }}\left( x_{0},\cdot \right) $ is integrable, we see
that $\int_{E^{\prime }}G_{V_{\delta }}\left( x_{0},\cdot \right) d\mu $ can
be made arbitrarily small by choosing $\varepsilon >0$ small enough. Choose $%
\varepsilon $ so small that%
\begin{equation*}
2NC\int_{E^{\prime }}G_{V_{\delta }}\left( x_{0},y\right) d\mu \left(
y\right) <\eta /2\inf_{\partial \Omega }q,
\end{equation*}%
which implies that%
\begin{equation*}
\varphi \left( x_{0}\right) <\eta /2\int_{\partial \Omega }uqd\sigma .
\end{equation*}%
Since by (\ref{wx0}) 
\begin{equation*}
w\left( x_{0}\right) <\left( 1-\eta /2\right) \int_{\partial \Omega
}uqd\sigma ,
\end{equation*}%
we obtain%
\begin{equation*}
u\left( x_{0}\right) =\varphi \left( x_{0}\right) +w\left( x_{0}\right)
<\int_{\partial \Omega }uqd\sigma ,
\end{equation*}%
which was to be proved.
\end{proof}

Let $V_{\max }$ be a solution of the problem (\ref{optim}). Denote by $U$
the eigenspace of $-\Delta -V_{\max }$ associated with the eigenvalue $%
\lambda _{k}\left( V_{\max }\right) =0$ assuming that $N$ is sufficiently
large.

\begin{lemma}
\label{Lemq}Fix some $c>0$ and consider the set 
\begin{equation*}
F=\left\{ V_{max}\leq -c\right\} .
\end{equation*}%
Then, for any Lebesgue point $x\in F$, then there exists a non-negative
function $q\in L^{\infty }\left( M\right) $ such that

\begin{enumerate}
\item $\int_{M}q\,d\mu =1$;

\item for any $u\in U\setminus \left\{ 0\right\} $ we have 
\begin{equation}
u^{2}(x)<\int_{M}u^{2}q\,d\mu .  \label{u<int}
\end{equation}
\end{enumerate}
\end{lemma}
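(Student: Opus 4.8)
The plan is to establish Lemma~\ref{Lemq} as a consequence of Lemma~\ref{measureMani} applied to a well-chosen subdomain $\Omega$ containing the Lebesgue point $x$. The key observation is that near a Lebesgue point $x$ of the set $F=\{V_{\max}\le -c\}$, the potential $V_{\max}$ is close to a value $\le -c$ on most of a small ball around $x$; after rescaling we want to put ourselves in the setting of the potential $W$ in \eqref{W1}, where $W$ equals $N$ on a set of small measure $E$ and $-1/N$ elsewhere. So first I would fix $x\in F$ a Lebesgue point, choose $N$ large (as already required for $\lambda_k(V_{\max})=0$), and pick a connected open $\Omega\subset M$ with smooth boundary containing $x=x_0$ — for instance a small geodesic ball, or more flexibly any such domain. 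The point is that an eigenfunction $u\in U$ satisfies $-\Delta u-V_{\max}u=0$ on $M$, hence on $\Omega$, so writing $v=|u|$ (or working on a nodal domain where $u>0$) we get $\Delta v + V_{\max} v \ge 0$ in the distributional/$C^2$ sense away from the zero set, since $|u|$ is subharmonic-type where $u\neq 0$. Actually the cleanest route: on $\Omega$ the function $u$ solves $\Delta u = -V_{\max}u$, and by elliptic regularity $u\in C^2$; we cannot assume $u>0$, but we can square and use $\Delta(u^2) = 2u\Delta u + 2|\nabla u|^2 = -2V_{\max}u^2 + 2|\nabla u|^2 \ge -2V_{\max}u^2$, so $u^2$ satisfies $\Delta(u^2) + 2V_{\max}(u^2)\ge 0$.

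Next I would compare $2V_{\max}$ with the model potential $W$ from \eqref{W1}. On $F$ we have $2V_{\max}\le -2c$; choosing $N\ge 1/(2c)$ gives $2V_{\max}\le -2c\le -1/N$ on $F$, and on $\Omega\setminus F$ we only know $2V_{\max}\le 2N$ (since $\|V_{\max}\|_\infty\le N$), so we can take $E=\Omega\setminus F$ and the potential $W$ with value $N$ on $E$ and $-1/N$ on $\Omega\cap F$; then $W\ge 2V_{\max}$ pointwise on $\Omega$, hence $\Delta(u^2)+W(u^2)\ge \Delta(u^2)+2V_{\max}(u^2)\ge 0$. Wait — this requires $\mu(E)=\mu(\Omega\setminus F)$ small, which is \emph{not} automatic. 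This is where the Lebesgue point hypothesis enters: since $x$ is a Lebesgue point of $\mathbf{1}_F$, for any $\varepsilon>0$ there is a small ball $B=B(x,r)$ with $\mu(B\setminus F)\le\varepsilon\,\mu(B)$; but Lemma~\ref{measureMani} wants $\varepsilon=\varepsilon(\Omega,N,x_0)$ \emph{fixed} and $\mu(E)\le\varepsilon$ absolutely, not relatively. The resolution is to fix the domain $\Omega$ first (say a ball of radius comparable to the injectivity radius, or any convenient fixed domain, with $x_0=x$), obtain $\varepsilon=\varepsilon(\Omega,N,x)$ from Lemma~\ref{measureMani}, and then — since the role of $x$ in $\varepsilon$ may cause circularity — instead fix a \emph{reference} domain once and for all up to the isometry/coordinate chart moving $x$ to a base point; by compactness of $M$ the constant $\varepsilon$ can be taken uniform. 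Then use the Lebesgue-point property at scale: the set $F$ has density $1$ at $x$, so we may shrink $\Omega$ around $x$ (staying in a fixed shape) until $\mu(\Omega\setminus F)\le\varepsilon$.

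With $\Omega$, $E=\Omega\setminus F$, $W$ as above fixed so that $\mu(E)\le\varepsilon$ and $W\ge 2V_{\max}$, set $v=u^2$, which is a non-negative $C^2$ solution of $\Delta v+Wv\ge 0$ in $\Omega$. If $v$ vanishes somewhere interior that's fine since $u^2\ge 0$; strictly we need $v$ positive to invoke Lemma~\ref{measureMani}, so if $u(x_0)=0$ the desired inequality \eqref{u<int} is immediate (the right side is positive unless $u\equiv 0$ on $\Omega$, but $u\not\equiv 0$ and it is real-analytic-type/unique-continuation, so $\int_M u^2 q>0$). When $u(x_0)\neq0$, apply Lemma~\ref{measureMani} to get $v(x_0)<\int_{\partial\Omega}v\,\tilde q\,d\sigma$ where $\tilde q$ is the Laplace Poisson kernel of $\Omega$ at $x_0$. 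Finally I would convert this boundary integral into an integral over $M$: extend $\tilde q\,d\sigma$ to a measure on $M$, or rather define $q$ on $M$ by taking the appropriate harmonic-measure density — but the statement wants $q\in L^\infty(M)$ with $\int_M q\,d\mu=1$ and $u^2(x)<\int_M u^2 q\,d\mu$. The passage from a boundary surface measure to an absolutely continuous $q\,d\mu$ on $M$ can be done by one more regularization: replace the sharp boundary $\partial\Omega$ by a thin collar and spread the Poisson kernel over a neighborhood, using that $v=u^2$ is continuous, so $\int_{\partial\Omega}v\tilde q\,d\sigma = \lim$ of $\int$ over a shrinking collar of a mollified density; alternatively average $\Omega$ over a one-parameter family of domains $\Omega_s$. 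The main obstacle I anticipate is exactly this last bookkeeping — (i) handling the sign-changing eigenfunction $u$ via $v=u^2$ and checking $\Delta(u^2)+W u^2\ge0$ rigorously including across the nodal set, and (ii) manufacturing an honest $L^\infty(M)$ density $q$ with unit mass from the boundary Poisson kernel of $\Omega$, uniformly in the position of the Lebesgue point $x$ — the comparison of potentials and the density-point argument are comparatively routine once the right fixed domain $\Omega$ is set up.
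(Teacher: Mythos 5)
Your plan follows the paper's proof essentially step by step: pass to $v=u^2$, observe $\Delta v+2V_{\max}v\ge 0$, majorize $2V_{\max}$ by a two-valued model potential, invoke the Lebesgue-point property of $x$ to control the measure of the bad set, apply Lemma~\ref{measureMani}, and then promote the Poisson kernel to an $L^{\infty}(M)$ density. However, there is a concrete slip in the potential comparison. You take $E=\Omega\setminus F$, $W=N$ on $E$ and $W=-1/N$ on $\Omega\cap F$, and assert $W\ge 2V_{\max}$; but on $\Omega\setminus F$ you only know $2V_{\max}\le 2N$, so $W=N$ does \emph{not} dominate $2V_{\max}$ there. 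The paper instead applies Lemma~\ref{measureMani} with a separate, larger constant $J=\max\bigl(2N,\frac{1}{2c}\bigr)$ in place of $N$, so that $J\ge 2V_{\max}$ on the bad set and $-1/J\ge -2c\ge 2V_{\max}$ on $F$. Your remark ``choosing $N\ge 1/(2c)$'' cannot play this role, since $N$ is already fixed by the optimization problem and is also the $L^{\infty}$ bound on $V_{\max}$; the fix is to use the auxiliary constant $J$.

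The two obstacles you flag at the end — (i) reconciling the \emph{absolute} smallness $\mu(E)\le\varepsilon(\Omega,N,x_0)$ required by Lemma~\ref{measureMani} with the \emph{relative} smallness that a Lebesgue point of $F$ provides (potentially circular when $\Omega$ itself shrinks with $r$), and (ii) turning the boundary measure $q\,d\sigma$ on $\partial\Omega$ into an honest $L^{\infty}(M)$ density with respect to $d\mu$ — are genuine subtleties. The paper handles them tersely: it chooses $\Omega=B(x,r)$, invokes the density condition at scale $r$, and concludes with ``extend $q$ by zero outside $B(x,r)$'' without spelling out the $d\sigma$-to-$d\mu$ passage. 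Your thin-collar / averaging-over-$\Omega_s$ suggestion is a reasonable way to make (ii) precise. Similarly, your worry that $v=u^2$ is only non-negative (vanishing on the nodal set) rather than strictly positive, as Lemma~\ref{measureMani} nominally requires, is legitimate and is not addressed in the paper either; inspecting the proof of Lemma~\ref{measureMani} shows its argument only needs $v\ge0$ on $\partial\Omega$, but this relaxation should be stated. In short, modulo the $J$-constant fix, you reconstructed the paper's argument and correctly identified the places where additional care is needed.
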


\begin{proof}
Set $V=V_{\max }$. Any function $u\in U$ satisfies $\Delta u+Vu=0,$ which
implies by a simple calculation that the function $v=u^{2}$ satisfies 
\begin{equation*}
\Delta v+2Vv\geq 0.
\end{equation*}%
Next, we apply Lemma \ref{measureMani} with $J=\max (2N,\frac{1}{2c})$.
Choose $r$ so small that the density of the set $F$ in $B(x,r)$ is
sufficiently close to $1$, namely, 
\begin{equation*}
\mu \left( F\cap B\left( x,r\right) \right) >\left( 1-\varepsilon \right)
\mu \left( B\left( x,r\right) \right) ,
\end{equation*}%
where $\varepsilon =\varepsilon \left( J\right) $ is given in Lemma \ref%
{measureMani}. Since $h\leq 2N\leq J$ in $B\left( x,r\right) $ and%
\begin{eqnarray*}
\mu \left( \left\{ h>-\frac{1}{J}\right\} \cap B\left( x,r\right) \right)
&\leq &\mu \left( \left\{ h>-2c\right\} \cap B\left( x,r\right) \right) \\
&=&\mu \left( \left\{ V>-c\right\} \cap B\left( x,r\right) \right) \\
&<&\varepsilon \mu \left( B\left( x,r\right) \right) ,
\end{eqnarray*}%
all the hypotheses of Lemma \ref{measureMani} are satisfied. Let $q$ be the
function that exists by Lemma \ref{measureMani} in some small ball $B\left(
x,r\right) .$ Extending $q$ by setting $q=0$ outside $B\left( x,r\right) $
we obtain a desirable function.
\end{proof}

\subsection{Proof of main Lemma \protect\ref{crucial}}

We can now prove Lemma \ref{crucial}, that is, that $V_{\max }\geq 0$.
Consider again the set%
\begin{equation*}
F=\left\{ V_{max}\leq -c\right\} ,
\end{equation*}%
where $c>0$. We want to show that, for any $c>0$, 
\begin{equation*}
\mu (F)=0,
\end{equation*}%
which will imply the claim. Assume the contrary, that is $\mu (F)>0$ for
some $c>0$. Denote by $F_{L}$ the set of Lebesgue points of $F$. For any $%
x\in F_{L}$ denote by $q_{x}$ the function $q$ that is given by Lemma \ref%
{Lemq}. For $x\notin F_{L}$ set $q_{x}=\delta _{x}$. Then $x\mapsto q_{x}$
is a Markov kernel and, for all $x\in M$ and $u\in U$%
\begin{equation}
u^{2}\left( x\right) \leq \int_{M}u^{2}q_{x}d\mu .  \label{ule}
\end{equation}

Denote by $\mathcal{M}$ the set of all probability measures on $M$. Define
on $\mathcal{M}$ a partial order: $\nu _{1}\preceq \nu _{2}$ if and only if%
\begin{equation}
\int_{M}u^{2}d\nu _{1}\leq \int_{M}u^{2}d\nu _{2}\ \text{for all }u\in
U\setminus \left\{ 0\right\} .  \label{12}
\end{equation}%
Define $\nu _{0}\in \mathcal{M}$ by 
\begin{equation*}
d\nu _{0}=\frac{1}{\mu \left( F_{L}\right) }\mathbf{1}_{F_{L}}d\mu
\end{equation*}%
and measure $\nu _{1}\in \mathcal{M}$ by%
\begin{equation*}
\nu _{1}=\int_{M}q_{x}d\nu _{0}\left( x\right) .
\end{equation*}%
Since $\nu _{0}\left( F_{L}\right) >0$, we obtain for any $u\in U\setminus
\left\{ 0\right\} $ that%
\begin{eqnarray}
\int_{M}u^{2}d\nu _{1} &=&\int_{M}\left( \int_{M}u^{2}q_{x}d\mu \right) d\nu
_{0}\left( x\right)  \notag \\
&\geq &\int_{F_{L}}\left( \int_{M}u^{2}q_{x}d\mu \right) d\nu _{0}\left(
x\right) +\int_{M\setminus F_{L}}\left( \int_{M}u^{2}q_{x}d\mu \right) d\nu
_{0}\left( x\right)  \notag \\
&>&\int_{F_{L}}u^{2}\left( x\right) d\nu _{0}\left( x\right)
+\int_{M\setminus F_{L}}u^{2}\left( x\right) d\nu _{0}\left( x\right)  \notag
\\
&=&\int_{M}u^{2}d\nu _{0}.  \label{nu10}
\end{eqnarray}%
In particular, we have $\nu _{0}\preceq \nu _{1}$. Consider the following
subset of $\mathcal{M}$:%
\begin{equation*}
\mathcal{M}_{1}=\left\{ \nu \in \mathcal{M}:\nu \succeq \nu _{1}\right\} .
\end{equation*}

Let us prove that $\mathcal{M}_{1}$ has a maximal element. By Zorn's Lemma,
it suffices to show that any chain (=totally ordered subset) $\mathcal{C}$
of $\mathcal{M}_{1}$ has an upper bound in $\mathcal{M}_{1}$. It follows
from $\dim U<\infty $ that there exists an increasing sequence $\left\{ \nu
_{i}\right\} _{i=1}^{\infty }$ of elements of $\mathcal{C}$ such that, for
all $u\in U$,%
\begin{equation*}
\lim_{i\rightarrow \infty }\int_{M}u^{2}d\nu _{i}\rightarrow \sup_{\left\{
\nu \in \mathcal{C}\right\} }\int_{M}u^{2}d\nu .
\end{equation*}%
The sequence $\left\{ \nu _{_{i}}\right\} _{i=1}^{\infty }$ of probability
measures is $w^{\ast }$-compact. Without loss of generality we can assume
that this sequence is $w^{\ast }$-convergent. It follows that the measure%
\begin{equation*}
\nu _{\mathcal{C}}=w^{\ast }\text{-}\lim \nu _{i}\in \mathcal{M}_{1}
\end{equation*}%
is an upper bound for $\mathcal{C}$.

By Zorn's Lemma, there exists a maximal element $\nu $ in $\mathcal{M}_{1}$.
Note that the measure $\nu $ can be alternatively constructed by using a
standard balayage procedure (see e.g. \cite[Proposition 2.1, p. 250]{BlieHan}%
). Consider first the measure $\nu ^{\prime }$ defined by $\nu ^{\prime
}=\int_{M}q_{x}d\nu \left( x\right) $. It follows from (\ref{ule}) that for
any $u\in U$%
\begin{eqnarray*}
\int_{M}u^{2}d\nu ^{\prime } &=&\int_{M}\left( \int_{M}u^{2}q_{x}d\mu
\right) d\nu \\
&\geq &\int_{M}u^{2}d\nu ,
\end{eqnarray*}%
that is, $\nu ^{\prime }\succeq \nu $, in particular, $\nu ^{\prime }\in 
\mathcal{M}_{1}$. Since $\nu $ is a maximal element in $\mathcal{M}_{1}$, it
follows that $\nu ^{\prime }=\nu $, which implies the identity%
\begin{equation}
\int_{M}u^{2}d\nu =\int_{M}\left( \int_{M}u^{2}q_{x}d\mu \right) d\nu .
\label{nu=nu}
\end{equation}%
Now we can prove that $\nu \left( F_{L}\right) =0$. Assuming from the
contrary that $\nu \left( F_{L}\right) >0$, we obtain, for any $u\in
U\setminus \left\{ 0\right\} $.%
\begin{eqnarray}
\int_{M}u^{2}d\nu &=&\int_{M}\left( \int_{M}u^{2}q_{x}d\mu \right) d\nu
\left( x\right)  \notag \\
&\geq &\int_{F_{L}}\left( \int_{M}u^{2}q_{x}d\mu \right) d\nu \left(
x\right) +\int_{M\setminus F_{L}}\left( \int_{M}u^{2}q_{x}d\mu \right) d\nu
\left( x\right)  \notag \\
&>&\int_{F_{L}}u^{2}\left( x\right) d\nu \left( x\right) +\int_{M\setminus
F_{L}}u^{2}\left( x\right) d\nu \left( x\right)  \notag \\
&=&\int_{M}u^{2}d\nu ,  \label{main4}
\end{eqnarray}%
which is a contradiction. Finally, it follows from (\ref{nu10}) and $\nu \in 
\mathcal{M}_{1}$ that, for any $u\in U\setminus \left\{ 0\right\} $,%
\begin{equation*}
\int_{M}u^{2}d\nu _{0}<\int_{M}u^{2}d\nu .
\end{equation*}%
Measure $\nu $ can be approximated in $w^{\ast }$-sense by measures with
bounded densities sitting in $M\setminus F_{L}.$ Therefore, there exists a
non-negative function $\varphi \in L^{\infty }\left( M\right) $ that
vanishes on $F_{L}$ and such that%
\begin{equation*}
\int_{M}\varphi d\mu =1
\end{equation*}%
and, for any $u\in U\setminus \left\{ 0\right\} $, 
\begin{equation}
\int_{M}u^{2}\varphi _{0}d\mu <\int_{M}u^{2}\varphi d\mu  \label{u<}
\end{equation}%
where $\varphi _{0}=\frac{1}{\mu \left( F_{L}\right) }\mathbf{1}_{F_{L}}$.
Consider now the potential 
\begin{equation*}
V_{t}=V_{max}+t\varphi _{0}-t\varphi .
\end{equation*}%
We have for all $t$ 
\begin{equation*}
\int_{M}V_{t}d\mu =\int_{M}V_{\max }d\mu
\end{equation*}
and for $t\rightarrow 0$ 
\begin{equation*}
\lambda _{k}(V_{t})=\lambda _{k}(V_{\max })-t\alpha +o(t),
\end{equation*}%
where $\alpha $ is the minimal eigenvalue of the quadratic form%
\begin{equation*}
Q\left( u,u\right) =\int_{M}u^{2}\left( \varphi _{0}-\varphi \right) d\mu ,
\end{equation*}%
which by (\ref{u<}) is negative definite. Therefore, $\alpha <0$, which
together with $\lambda _{k}\left( V_{\max }\right) =0$ implies that, for all
small enough $t>0$%
\begin{equation*}
\lambda _{k}(V_{t})>0.
\end{equation*}%
Finally, let us show that $\left\vert V_{t}\right\vert \leq N$ $\mathrm{a.e.}
$ Indeed, on $F$ we have 
\begin{equation*}
V_{t}\leq -c+t\varphi _{0}<N
\end{equation*}%
for small enough $t>0$, and on $M\setminus F_{L}$ we have%
\begin{equation*}
V_{t}\leq V_{\max }-t\varphi \leq V_{\max }\leq N.
\end{equation*}%
Therefore, $V\leq N$ $\mathrm{a.e.}$ for small enough $t>0$. Similarly, we
have on $F_{L}$%
\begin{equation*}
V_{t}\geq V_{\max }+t\varphi _{0}\geq V_{\max }\geq -N
\end{equation*}%
and on $M\setminus F$%
\begin{equation*}
V_{t}\geq -c-t\varphi \geq -N
\end{equation*}%
for small enough $t>0$, which implies that $\left\vert V_{t}\right\vert \leq
N$ $\mathrm{a.e.}$ for small enough $t>0$.

Hence, we obtain that $V_{t}$ is a solution to our optimization problem (\ref%
{optim}), but it satisfies $\lambda _{k}(V_{t})>0$, which contradicts the
optimality of $V_{t}$ by Lemma \ref{Lemlak=0}.

{\footnotesize

}
\end{document}